\documentclass[11pt]{article}      
\usepackage{amsmath,amssymb,amscd,amsthm, graphicx, verbatim, setspace} 
\usepackage{authblk}

\theoremstyle{plain}
\newtheorem{thm}{Theorem}[section]
\newtheorem{lem}[thm]{Lemma}
\newtheorem{cor}[thm]{Corollary}
\newtheorem{rem}[thm]{Remark}
\newtheorem{rec}[thm]{Recurrence}

\title{Forbidden formations in 0-1 matrices}
\date{}
\author{Jesse Geneson\\
\small PSU\\
\small\tt geneson@gmail.com
}
\begin{document}
\maketitle
\begin{abstract}
Keszegh (2009) proved that the extremal function $ex(n, P)$ of any forbidden light $2$-dimensional 0-1 matrix $P$ is at most quasilinear in $n$, using a reduction to generalized Davenport-Schinzel sequences. We extend this result to multidimensional matrices by proving that any light $d$-dimensional 0-1 matrix $P$ has extremal function $ex(n, P,d) = O(n^{d-1}2^{\alpha(n)^{t}})$ for some constant $t$ that depends on $P$. 

To prove this result, we introduce a new family of patterns called $(P, s)$-formations, which are a generalization of $(r, s)$-formations, and we prove upper bounds on their extremal functions. In many cases, including permutation matrices $P$ with at least two ones, we are able to show that our $(P, s)$-formation upper bounds are tight.
\end{abstract}

\section{Introduction}

We say that 0-1 matrix $M$ \emph{contains} 0-1 matrix $P$ if some submatrix of $M$ either equals $P$ or can be turned into $P$ by changing some ones to zeroes. Otherwise we say that $M$ \emph{avoids} $P$. The function $ex(n, P)$ is defined as the maximum number of ones in any 0-1 matrix with $n$ rows and $n$ columns that avoids $P$. This function has been applied to many problems, including the Stanley-Wilf conjecture \cite{MT}, the maximum number of unit distances in a convex $n$-gon \cite{Fure}, bounds on the lengths of sequences avoiding forbidden patterns \cite{petmnl}, and robot navigation problems \cite{Mit}.

The 0-1 matrix extremal function has a lower bound of $n$ for all 0-1 matrices except those with all zeroes or just one entry, just by using an $n \times n$ matrix with either a single row or a single column of ones. Given this trivial lower bound, F\"{u}redi and Hajnal posed the problem of finding all 0-1 matrices $P$ such that $ex(n, P) = O(n)$ \cite{FH}.

There is only partial progress on this problem. Marcus and Tardos proved that $ex(n, P) = O(n)$ for every permutation matrix $P$ \cite{MT}, solving the Stanley-Wilf conjecture. Their proof used a divide-and-conquer approach to derive an inequality for $ex(n, P)$ in terms of $ex(n/k^2, P)$, where $k$ is the sidelength of $P$. There are other classes of 0-1 matrices that are known to have linear extremal functions, such as double permutation matrices \cite{G}, single rows or columns of ones, and 0-1 matrices with ones in a V-shape \cite{Ke}. 

Some 0-1 matrices have extremal functions that are very close to linear, but not exactly linear. We call a function $f(n)$ quasilinear if $f(n) = O(n 2^{\alpha(n)^{t}})$ for some constant $t$, where $\alpha(n)$ is the very slow-growing inverse Ackermann function. For example, both patterns below are known to have extremal functions on the order of $\theta(n \alpha(n))$, so their extremal functions are both quasilinear and nonlinear. 

$$
\begin{pmatrix}
\bullet &         & \bullet \\
        & \bullet &         & \bullet\\
\end{pmatrix}
\begin{pmatrix}
\bullet &         &         &         \\
        &         & \bullet &         \\
        & \bullet &         & \bullet\\
\end{pmatrix}
$$

In general, many 0-1 matrices are known to have quasilinear extremal functions. A 0-1 matrix is called light if it has at most a single one in each row. Keszegh proved that every light 0-1 matrix has a quasilinear extremal function \cite{Ke} using a simple transformation from 0-1 matrices to sequences. Note that we could have defined lightness with columns instead of rows, since $ex(n, P)$ is preserved under matrix transposition.

In the same paper, Keszegh proved the existence of infinitely many minimally non-quasilinear patterns, and this proof was later modified in \cite{G} to prove the existence of infinitely many minimally non-linear patterns.

Extremal functions of forbidden patterns have also been studied for higher-dimensional matrices. Define $ex(n, P, d)$ to be the maximum number of ones in a $P$-free $d$-dimensional 0-1 matrix of sidelength $n$. As with the $d = 2$ case, in general the $d$-dimensional extremal function has a lower bound of $n^{d-1}$ except for forbidden patterns with all zeroes or just one entry.

The natural generalization of F\"{u}redi and Hajnal's problem to $d$ dimensions is to find all $d$-dimensional $0-1$ matrices $P$ for which $ex(n, P, d) = O(n^{d-1})$. This property has been proved for all $d$-dimensional permutation matrices and double permutation matrices $P$ \cite{km, gt}. 

The natural extension of quasilinearity to $d$ dimensions replaces the $n$ with an $n^{d-1}$. Call a $d$-dimensional 0-1 matrix $P$ light if $P$ has no pair of ones with the same $i^{th}$ coordinate for all $i \neq d$. Note that we could have defined lightness differently using any of the other dimensions besides $i = d$ since $ex(n, P, d)$ is preserved under transposition of dimensions.

We generalize Keszegh's result to any number of dimensions by proving that any light $d$-dimensional matrix $P$ has extremal function on the order of $O(n^{d-1} 2^{\alpha(n)^{t}})$ for some constant $t$ that depends on $P$. To prove this result about light matrices, we define a new forbidden pattern called a $(P, s)$-formation, and a new extremal function for multidimensional 0-1 matrices. This new extremal function is similar to one used in a few earlier papers, but we modify the definition to obtain more useful properties that we do not have with the old definition.

Using an inequality relating the new extremal function and the standard extremal function $ex(n, P, d)$, we are able to derive upper bounds on the standard extremal functions $ex(n, F_{P, s}, d+1)$ of $(P, s)$-formations. In cases when $ex(n, P, d) = O(n^{d-1})$ and $P$ has at least two ones, such as when $P$ is a $j$-tuple permutation matrix, we show that our upper bounds are tight.

Our results also address a question from \cite{niv}: What other problems, like interval chains and almost-DS sequences, satisfy recurrences like Recurrences 4.7 and 4.11 from \cite{niv}? The new extremal functions that we define for $(P, s)$-formations provide examples of such problems.

\subsection{Multidimensional formations}

An $i$-row of a $d$-dimensional 0-1 matrix is a set of entries all of whose coordinates other than the $i^{th}$ coordinate have the same value. Given a $d$-dimensional 0-1 matrix $P$ with $r$ ones, a $(P, s)$-formation is a set of $s r$ ones in a $(d+1)$-dimensional 0-1 matrix that can be partitioned into $s$ disjoint groups $G_{1}, \dots, G_{s}$ of $r$ ones so that any two groups $G_{i}, G_{j}$ have ones in the exact same sets of $1$-rows, the maximum first coordinate of any one in $G_{i}$ is less than the minimum first coordinate of any one in $G_{j}$ if $i < j$, and $P$ is the pattern obtained from  treating the ones as a subset of the set of all entries, removing the first coordinate of each one, and only counting repeated elements once.

For each $d$-dimensional $0-1$ matrix $P$, define  $F_{P, s}$ to be the set of all $(P,s)$-formations. Observe that if $P$ is a $d$-dimensional $0-1$ matrix and $Q$ is a light $(d+1)$-dimensional $0-1$ matrix with $s$ ones that reduces to $P$ when the first coordinate is removed, then every element of the class $F_{P,s}$ contains $Q$, so $ex(n,Q,d+1) \leq  ex(n, F_{P, s}, d+1)$. We will also use the function $ex(n,m, F, d)$, which is the maximum number of ones in an $F$-free $d$-dimensional $0-1$ matrix with first dimension of length $m$ and other dimensions of length $n$.

To bound $ex(n, F_{P, s}, d+1)$, we use a technique analogous to the one used to bound extremal functions of forbidden $(r,s)$-formations in sequences \cite{niv}, fat formations in $2$-dimensional 0-1 matrices \cite{ck}, and tuples stabbing interval chains \cite{AKNSS}. In the case of sequences in \cite{niv}, the technique involves defining a new extremal function which maximizes the number of distinct letters in a formation-avoiding sequence with $m$ blocks, and using upper bounds on the new extremal function to derive upper bounds on the standard sequence extremal function. It should also be noted that Pettie derived sharp bounds on extremal functions of $(r, s)$-formations and doubled $(r, s)$-formations \cite{Pet1, Pet2} for cases that were not already covered in \cite{niv, ck} using a technique very different from the one in \cite{niv} and this paper.

\subsection{New extremal function}

The new sequence extremal function from \cite{niv} was extended to 0-1 matrices in \cite{ck} and \cite{gs}. Those papers defined a matrix extremal function $ex_{k}(m, P)$ which is the maximum number of columns in a $P$-free 0-1 matrix with $m$ rows in which every column has at least $k$ ones. Note that $P$ can be a single pattern or a family of patterns.

Compared to the new sequence extremal function from \cite{niv}, the definition of $ex_{k}(m, P)$ replaces letters with columns and occurrences of letters with ones. However, there is a more natural definition to use for $ex_{k}(m, P)$ in order to prove the results in this paper. With this new definition, some of the results in \cite{gs} and \cite{ck} can be strengthened, or at least obtained as corollaries. 

In this paper, we define $lx_{k}(m, P)$ to be the maximum possible number of distinct letters in a matrix with $m$ rows in which each letter occurs at least $k$ times, each letter has all of its occurrences in a single column, and the $0-1$ matrix obtained from changing all letters to ones and all blanks to zeroes avoids $P$.

Before defining the multidimensional version of $lx_{k}(m, P)$, we prove a few basic facts about $lx_{k}(m, P)$. The first fact below is similar to Theorem 1 in \cite{gs}, which is restricted to a class of patterns called range-overlapping, but the lemma below has a more general statement than Theorem 1 in \cite{gs} since the new definition of $lx_{k}(m, P)$ more closely parallels the definition of the corresponding extremal function for sequences from \cite{niv}. 

\begin{lem}\label{relate0}
For any 0-1 matrix or family of 0-1 matrices $P$, $ex(n, m, P) \leq k(lx_{k}(m, P)+n)$. 
\end{lem}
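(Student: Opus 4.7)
The plan is to pass from an extremal $P$-free $m \times n$ matrix to a labeled matrix that witnesses $lx_{k}(m, P)$, losing only a controllable amount in the process. Specifically, I would let $M$ be any $P$-free 0-1 matrix with $m$ rows and $n$ columns and denote by $c_{i}$ the number of ones in column $i$. Write $c_{i} = k q_{i} + r_{i}$ with $0 \leq r_{i} < k$. Within column $i$, pick $k q_{i}$ of the ones (say the topmost ones) and partition them into $q_{i}$ blocks of size exactly $k$; assign to each block a fresh distinct letter, and turn the remaining $r_{i}$ ones into blanks. Since all occurrences of each new letter sit in a single column and each letter appears exactly $k$ times, this labeled matrix satisfies the structural conditions in the definition of $lx_{k}(m, P)$.

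The next step is to verify the $P$-avoidance clause. The 0-1 matrix obtained from the labeled matrix by turning letters into ones and blanks into zeroes is exactly $M$ with the $r_{i}$ discarded ones in each column set back to zero; this is entrywise at most $M$, so it contains no pattern that $M$ does not contain. In particular it still avoids $P$, so the labeled construction is admissible for $lx_{k}(m, P)$. Therefore the total number of letters satisfies $\sum_{i=1}^{n} q_{i} \leq lx_{k}(m, P)$.

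Finally I would add the two counts: the number of ones in $M$ is $\sum_{i} c_{i} = k \sum_{i} q_{i} + \sum_{i} r_{i} \leq k \cdot lx_{k}(m, P) + n(k-1) \leq k(lx_{k}(m, P) + n)$, which is the desired inequality. I do not foresee a genuine obstacle here; the only subtle point is making sure that the labeled object really is admissible in the $lx_{k}$ problem, which is why it matters that ones in the same column of $M$ lie in the same column of the labeled matrix, and that erasing ones preserves the $P$-free property. The bookkeeping of the $r_{i}$ leftovers is what contributes the additive $kn$ term.
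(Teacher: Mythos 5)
Your proposal is correct and follows essentially the same argument as the paper: group the ones of each column of an extremal $P$-free matrix into blocks of $k$, label each block with a fresh letter, discard the at most $k-1$ leftover ones per column, and note that the resulting labeled matrix is admissible for $lx_{k}(m,P)$ because its underlying 0-1 matrix is contained in the original. The bookkeeping $\sum_i c_i \leq k\,lx_{k}(m,P) + (k-1)n \leq k(lx_{k}(m,P)+n)$ matches the paper's count.
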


\begin{proof}
Let $A$ be a 0-1 matrix with $m$ rows, $n$ columns, and $ex(n, m, P)$ ones which avoids $P$. For any column of $A$, divide the ones in the column into groups of size $k$, write a new letter on each group of ones, and delete at most $k-1$ ones in each column that are left over if the number of ones in the column is not divisible by $k$. 

The new matrix has each letter occurring $k$ times with its occurrences in the same column, and the $0-1$ matrix obtained from changing all letters to ones and all blanks to zeroes is contained in $A$, so it avoids $P$. Since we deleted at most $(k-1)n$ ones, we have $ex(n, m, P) \leq k(lx_{k}(m, P)+n)$.
\end{proof}

The next lemma strengthens Lemma 2 from \cite{gs}, since $lx_{k}(m, P)$ is never less than $ex_{k}(m, P)$.

\begin{lem}
If $P$ is a family of 0-1 matrices, $c$ is a constant, and $g$ satisfies $ex(n, m, P) \leq g(m)+c n$ for all $m, n$, then $lx_{k}(m, P) \leq \frac{g(m)}{k-c}$ for all $k > c$.
\end{lem}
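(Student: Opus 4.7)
The plan is a short counting argument that compares an extremal witness for $lx_k(m,P)$ against the hypothesized bound on $ex(n,m,P)$. Let $\ell = lx_{k}(m, P)$ and fix a witness matrix $A$ with $m$ rows and some number $n$ of columns that realizes $\ell$: it has $\ell$ distinct letters, each occurring at least $k$ times in a single column, and the 0-1 reduction $A'$ of $A$ avoids $P$.

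The first step is a harmless normalization: discard every column of $A$ that contains no letter. This does not change $\ell$, and since each of the remaining columns contains at least one letter and no letter spans two columns, we get $n \leq \ell$ after the normalization. This observation is what makes the single-column hypothesis in the definition of $lx_k$ pay off, and it is the only slightly non-mechanical point in the argument.

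The second step is double-counting the ones of $A'$. From below, each of the $\ell$ letters contributes at least $k$ ones, giving at least $k\ell$ ones. From above, $A'$ is a $P$-free 0-1 matrix with $m$ rows and $n$ columns, so the hypothesis yields $k\ell \leq ex(n,m,P) \leq g(m) + cn \leq g(m) + c\ell$, where the last inequality uses $n \leq \ell$. Rearranging gives $(k-c)\ell \leq g(m)$, and dividing by $k-c > 0$ yields $\ell \leq g(m)/(k-c)$, as required.

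There is no real obstacle: the only thing one must be careful about is the inequality $n \leq \ell$, which requires the column-reduction step above; without it one would be stuck with a useless bound involving an unbounded $n$. The hypothesis $k > c$ is exactly what is needed so that $k - c$ is positive and the division in the final step is meaningful.
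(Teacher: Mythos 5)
Your proof is correct and is essentially the paper's argument: both pass to the nonempty columns to get $n \le lx_k(m,P)$, then double-count the ones of the reduced matrix against the bound $ex(n,m,P) \le g(m)+cn$ and rearrange. The only cosmetic difference is that you make the column-deletion and the inequality $n\le\ell$ explicit, whereas the paper states it as ``the number of nonempty columns is at most $lx_k(m,P)$.''
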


\begin{proof}
Let $A$ be any $P$-avoiding matrix with $m$ rows and $lx_{k}(m, P)$ letters, in which each letter occurs $k$ times and makes all of its occurrences in the same column. Note that the number of nonempty columns in $A$ is at most $lx_{k}(m, P)$, so $A$ has at most $ex(lx_{k}(m, P), m, P) \leq g(m)+ lx_{k}(m, P) c$ ones by assumption. 

Thus $lx_{k}(m, P) k \leq g(m)+lx_{k}(m, P) c$, so we have $lx_{k}(m, P) \leq \frac{g(m)}{k-c}$ for all $m$.
\end{proof}

Observe that the last lemma can be applied to any pattern $P$ with a known linear extremal function $ex(n, P)$ that has ones in multiple columns, such as any permutation matrix or double permutation matrix, to obtain $\theta(m/k)$ bounds on $lx_{k}(m, P)$.

Define $R_{s}(j)$ as in \cite{niv} for $s \geq 1, j \geq 2$ by $R_{1}(j) = 2, R_{2}(j) = 3$, and for $s \geq 3$ define $R_{s}(2) = 2^{s-1}+1$ and $R_{s}(j) = R_{s}(j-1)R_{s-2}(j)+2R_{s-1}(j)-3R_{s-2}(j)-R_{s}(j-1)+2$ for $j \geq 3$. As in \cite{ck}, $D_{s}(j)$ is defined as the family of functions satisfying $D_{1}(j) = 0$, $D_{2}(j) = 2$, $D_{s}(2) = 2^{s-1}+2^{s-2}-1$ and $D_{s}(j) = 2D_{s-1}(j)+(D_{s-2}(j)+1)(R_{s}(j-1)-3)+D_{s}(j-1)-R_{s}(j-1)+1$.

Cibulka and Kyncl defined a doubled $(r, s)$-formation as a set $S$ of $r(2s-2)$ ones for which there exists an $s$-partition of the rows and an $r$-tuple of columns each of which has a single one from $S$ in the top and bottom rows of the partition and two ones in all other intervals \cite{ck}. Define $D_{r, s}$ as the set of all doubled $(r, s)$-formations. Cibulka and Kyncl proved for $m \geq k \geq D_{s}(j)$ that $ex_{k}(m, D_{r, s}) \leq c_{s} r m \alpha_{j}(m)^{s-2}$ where $c_{s}$ is a constant that depends only on $s$.

We note below the observation that the proof used in \cite{ck} gives the same upper bound for $lx_{k}$, with columns replaced by letters in the proof.

\begin{rem}
For $m \geq k \geq D_{s}(j)$ and every $j,r,s \geq 2$, $lx_{k}(m, D_{r, s}) \leq c_{s} r m \alpha_{j}(m)^{s-2}$ where $c_{s}$ is a constant that depends only on $s$.
\end{rem}

Note that Cibulka and Kyncl's result that $ex_{k}(m, D_{r, s}) \leq c_{s} r m \alpha_{j}(m)^{s-2}$ follows as a corollary from the last remark.

\section{Bounds on formations}

Given a forbidden family $F$ of $d$-dimensional 0-1 matrices, define $lx_{k}(n,m,F,d)$ to be the maximum possible number of distinct letters in a $d$-dimensional 0-1 matrix where the first dimension has length $m$ and the other dimensions have length $n$, every letter has at least $k$ occurrences and they are all in the same $1$-row, and the matrix obtained by replacing all letters with ones and all blanks with zeroes avoids all matrices in $F$. Given a $d$-dimensional 0-1 matrix $P$, define $G_{P,s}(n,m,d+1,k) = lx_{k}(n,m,F_{P,s},d+1)/ex(n,P,d)$.

Before deriving recurrences for $G_{P,s}(n,m,d+1,k)$, we first derive a general inequality between $ex(n,m, F,d)$ and $lx_{k}(n,m,F,d)$ which is very similar to Lemma \ref{relate0} and the inequality between the two sequence extremal functions in \cite{niv}. 

\begin{lem}\label{relate}
For all families $F$ of forbidden $d$-dimensional 0-1 matrices, $ex(n,m,F,d) \leq k(lx_{k}(n,m,F,d)+n^{d-1})$.
\end{lem}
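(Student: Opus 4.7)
My plan is to mimic the proof of Lemma \ref{relate0} almost verbatim, with ``columns'' replaced by ``$1$-rows.'' The key point to exploit is that in the definition of $lx_{k}(n,m,F,d)$, each letter must occupy entries lying in a single $1$-row, and a $1$-row of a $d$-dimensional matrix whose first dimension has length $m$ and whose other dimensions have length $n$ is parametrized by a choice of the remaining $d-1$ coordinates. Hence there are exactly $n^{d-1}$ such $1$-rows, and this is where the $n^{d-1}$ term in the inequality will come from.

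Let $A$ be an $F$-free $d$-dimensional 0-1 matrix attaining $ex(n,m,F,d)$, with first dimension of length $m$ and the other dimensions of length $n$. For each $1$-row of $A$, I would partition the ones it contains into consecutive groups of size $k$, assign a distinct new letter to each group, and discard the at most $k-1$ leftover ones in that $1$-row. This produces a lettered $d$-dimensional matrix in which every letter appears exactly $k$ times, all in a common $1$-row; and the underlying 0-1 matrix is a submatrix of $A$, so it still avoids every pattern in $F$. Therefore the number of distinct letters is at most $lx_{k}(n,m,F,d)$, and the number of retained ones is at most $k\cdot lx_{k}(n,m,F,d)$.

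Summing the discard bound over the $n^{d-1}$ many $1$-rows gives at most $(k-1)n^{d-1}$ deletions, so
\[
ex(n,m,F,d) \;\leq\; k\cdot lx_{k}(n,m,F,d) + (k-1)n^{d-1} \;\leq\; k\bigl(lx_{k}(n,m,F,d) + n^{d-1}\bigr),
\]
which is the desired bound. I do not anticipate a serious obstacle; the only point to check carefully is the counting of $1$-rows (so that the ``leftover'' losses aggregate to at most $(k-1)n^{d-1}$ rather than something larger such as $(k-1)m n^{d-1}$) and the fact that erasing ones and grouping the rest into same-$1$-row blocks of size $k$ genuinely produces a configuration legal for $lx_{k}(n,m,F,d)$.
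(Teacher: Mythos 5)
Your proposal is correct and follows exactly the route the paper intends: the paper's proof of this lemma is simply the observation that the argument of Lemma \ref{relate0} carries over with $1$-rows in place of columns, and you have spelled out the same argument, including the key count that there are $n^{d-1}$ many $1$-rows so the leftover losses total at most $(k-1)n^{d-1}$.
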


\begin{proof}
This proof is nearly identical to the proof of Lemma \ref{relate0}, with $1$-rows replacing columns.
\end{proof}

The next proof below is very similar to one of the bounds in \cite{g1}, besides the factor of $ex(n, P,d)$. The $ex(n, P,d)$ can be seen as generalizing the factor of $r-1$ in that proof, since $ex(n, Q, 1) = r-1$ when $Q$ is the $1$-dimensional $0-1$ matrix of length $r$ with $r$ ones.

The $\binom{m-\lceil \frac{s}{2} \rceil}{\lfloor \frac{s}{2} \rfloor}$ upper bounds in \cite{g1} are stronger than the $\binom{m-2}{s-1}$ upper bounds for the same functions in \cite{niv}, but the rest of the proof of the main result after the next lemma will closely parallel \cite{niv}. In addition to being the first step of the main result, the next lemma also gives tight bounds on $ex(n,F_{P,3},d+1)$ for patterns $P$ which satisfy $ex(n, P, d) = \theta(n^{d-1})$.

\begin{lem}
$lx_{s}(n,m,F_{P,s},d+1) \leq ex(n, P, d) \binom{m-\lceil \frac{s}{2} \rceil}{\lfloor \frac{s}{2} \rfloor}$ for every $1 \leq s \leq m$.
\end{lem}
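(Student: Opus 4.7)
My plan is to assign each letter a short signature built from the first coordinates of its first $s$ occurrences, bound the number of possible signatures combinatorially, and bound the number of letters sharing any fixed signature by $ex(n,P,d)$ using $F_{P,s}$-avoidance. Concretely, for each letter $L$ let $c_1^L < c_2^L < \cdots < c_s^L$ denote the first coordinates of its first $s$ occurrences, and define its signature as the tuple of even-indexed first coordinates $\sigma(L)=(c_2^L, c_4^L, \ldots, c_{2\lfloor s/2 \rfloor}^L)$.

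To count the signatures, apply the substitution $v_k = c_{2k}^L - k$: strict increase follows from $c_{2k+2}^L \geq c_{2k}^L + 2$, the lower bound $v_1 \geq 1$ from $c_2^L \geq 2$, and the upper bound $v_{\lfloor s/2\rfloor} \leq m - \lceil s/2\rceil$ from the fact that the $s - 2\lfloor s/2\rfloor$ larger first coordinates still have to fit below $m$. So the signatures inject into the $\lfloor s/2\rfloor$-subsets of $[1, m - \lceil s/2\rceil]$, yielding at most $\binom{m - \lceil s/2\rceil}{\lfloor s/2\rfloor}$ possibilities.

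Now fix a signature $\sigma=(u_1,\ldots,u_{\lfloor s/2\rfloor})$ and let $\mathcal{L}_\sigma$ be the letters carrying it. Two distinct letters in $\mathcal{L}_\sigma$ cannot share a $1$-row, since otherwise they would both require the cell at first coordinate $u_1$ in their common $1$-row; so the $1$-row positions $\{p_L:L\in\mathcal{L}_\sigma\}$ are distinct points in $[n]^d$, and it suffices to show they form a $P$-avoiding $d$-dimensional matrix, giving $|\mathcal{L}_\sigma|\leq ex(n,P,d)$. Suppose instead that some $L_1,\ldots,L_r\in\mathcal{L}_\sigma$ have positions forming $P$. I would build a $(P,s)$-formation in the ambient matrix using the cut points $t_{2k-1}=u_k-1$ and $t_{2k}=u_k$ for $k=1,\ldots,\lfloor s/2\rfloor$, keeping the first $s-1$ of them in order. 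For each $L_i$ the first $s$ occurrences then distribute exactly one per interval: each even-indexed $c_{2k}^{L_i}=u_k$ lies in the interval ending at $u_k$, while each odd-indexed $c_{2k+1}^{L_i}$ lies strictly between $u_k=c_{2k}^{L_i}$ and $u_{k+1}=c_{2k+2}^{L_i}$ and so in $(u_k,u_{k+1}-1]$. Collecting the $r$ chosen occurrences per interval yields the $s$ ordered groups of a $(P,s)$-formation, contradicting $F_{P,s}$-avoidance.

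The main delicacy is parity bookkeeping at the top end: when $s=2t$, the last kept cut point is $t_{2t-1}=u_t-1$ and the top interval $(u_t-1,m]$ holds $c_s^{L_i}=u_t$; when $s=2t+1$, the last kept cut point is $t_{2t}=u_t$ and the top interval $(u_t,m]$ holds $c_s^{L_i}>u_t$. In both parities strict increase of the cut points follows from $u_{k+1}\geq u_k+2$, so the construction yields a legitimate set of $s-1$ ordered cut points and $s$ ordered intervals separating each $L_i$'s first $s$ occurrences as required. Summing $|\mathcal{L}_\sigma|\leq ex(n,P,d)$ over at most $\binom{m-\lceil s/2\rceil}{\lfloor s/2\rfloor}$ signatures then proves the lemma.
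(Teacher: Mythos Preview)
Your proof is correct and follows essentially the same approach as the paper: your ``signature'' $(c_2^L,c_4^L,\ldots)$ is exactly the paper's tuple of first coordinates of the ``even'' occurrences, and both arguments pigeonhole over the $\binom{m-\lceil s/2\rceil}{\lfloor s/2\rfloor}$ possible such tuples and then extract a $(P,s)$-formation from any class of size exceeding $ex(n,P,d)$. The paper states the final step in one sentence, whereas you spell out explicitly why letters sharing a signature lie in distinct $1$-rows and how the cut points $u_k-1,u_k$ partition the first $s$ occurrences of each letter into $s$ ordered intervals; this added detail is sound and makes the formation construction transparent.
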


\begin{proof}
Let $A$ be a $(d+1)$-dimensional 0-1 matrix with first dimension of length $m$, other dimensions of length $n$, and $lx_{s}(n,m,F_{P,s},d+1)$ distinct letters such that $A$ avoids $(P,s)$-formations and every letter of $A$ has at least $s$ occurrences, all in the same $1$-row. An occurrence of a letter in a $1$-row of $A$ is called even if there are an odd number of occurrences of the letter to the left of it. Otherwise the letter is called odd. 

Suppose for contradiction that $A$ has at least $1+ex(n,P,d) \binom{m-\lceil \frac{s}{2} \rceil}{\lfloor \frac{s}{2} \rfloor}$ letters. The number of distinct tuples $(m_{1}, \ldots, m_{\lfloor \frac{s}{2} \rfloor})$ for which a letter could have even occurrences with $1$-coordinate $m_{1}, \ldots, m_{\lfloor \frac{s}{2} \rfloor}$ is equal to the number of positive integer solutions to the equation $(1+x_{1})+\ldots + (1+ x_{\lfloor \frac{s}{2} \rfloor})+x_{1+\lfloor \frac{s}{2} \rfloor}= m+1$ if $s$ is even and $(1+x_{1})+\ldots + (1+ x_{\lfloor \frac{s}{2} \rfloor})+x_{1+\lfloor \frac{s}{2} \rfloor}= m$ if $s$ is odd. 

For each $s$ the equation has $ \binom{m-\lceil \frac{s}{2} \rceil}{\lfloor \frac{s}{2} \rfloor}$ positive integer solutions, so there are at least $ex(n,P,d)+1$ distinct letters $q_{1}, \ldots, q_{ex(n,P,d)+1}$ with even occurrences having the same $\lfloor \frac{s}{2} \rfloor$ $1$-coordinates of $A$. Then $A$ contains a $(P,s)$-formation, a contradiction. 
\end{proof}

\begin{cor}
$G_{P,s}(n,m,d+1,s) \leq \binom{m-\lceil \frac{s}{2} \rceil}{\lfloor \frac{s}{2} \rfloor}$ for every $1 \leq s \leq m$.
\end{cor}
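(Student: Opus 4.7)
The plan is essentially to read off the bound from the preceding lemma. Recall the definition $G_{P,s}(n,m,d+1,k) = lx_{k}(n,m,F_{P,s},d+1)/ex(n,P,d)$. Specializing to $k = s$, the corollary is equivalent to the inequality
\[
lx_{s}(n,m,F_{P,s},d+1) \leq ex(n,P,d)\binom{m-\lceil \frac{s}{2} \rceil}{\lfloor \frac{s}{2} \rfloor},
\]
which is exactly the statement of the previous lemma. So I would simply divide both sides of the lemma's conclusion by $ex(n,P,d)$ and invoke the definition of $G_{P,s}$.

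The only thing to verify before dividing is that $ex(n, P, d) \neq 0$, since otherwise $G_{P,s}$ would not be well-defined. This is harmless in the intended applications: $P$ is taken to have at least one entry (otherwise $(P,s)$-formations and the whole setup would be vacuous), and for any such $P$ we have $ex(n, P, d) \geq 1$ whenever $n$ is large enough, because a single one in a $d$-dimensional sidelength-$n$ matrix already avoids every $P$ with two or more ones, and for a single-entry $P$ the lemma is trivially true anyway. So I would briefly note this convention and then state that the corollary follows by dividing both sides of the lemma by $ex(n,P,d)$.

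There is no real obstacle here; the content is entirely in the previous lemma, and the corollary is a one-line rewriting in the normalized notation $G_{P,s}$ that will be used in the recurrences to follow. The reason to state it as a corollary is presumably just to record the bound in the form in which it will be fed into subsequent inductive arguments on $s$.
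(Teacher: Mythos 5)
Your proposal is correct and matches the paper's (implicit) argument exactly: the corollary is obtained from the preceding lemma by dividing both sides by $ex(n,P,d)$ and applying the definition of $G_{P,s}$, which is why the paper states it without a separate proof. Your aside about $ex(n,P,d)$ being nonzero is a reasonable bit of care but does not change the route.
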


\begin{cor}
$ex(n,F_{P,3},d+1) \leq 3(ex(n,P,d) n+ n^{d})$
\end{cor}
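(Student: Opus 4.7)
The plan is to obtain the corollary by composing the previous lemma with Lemma \ref{relate}, specialized to $s=3$ and $k=3$ respectively. There is no new combinatorial input needed: both tools are already in place, and the $s=3$ case is exactly where the binomial coefficient in the previous lemma becomes linear in $m$.

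First, I would instantiate the previous lemma at $s=3$. Since $\lceil s/2 \rceil = 2$ and $\lfloor s/2 \rfloor = 1$, the bound reads
\[
lx_{3}(n,m,F_{P,3},d+1) \;\leq\; ex(n,P,d)\,\binom{m-2}{1} \;=\; ex(n,P,d)\,(m-2) \;\leq\; ex(n,P,d)\,m.
\]
Next, I would apply Lemma \ref{relate} in dimension $d+1$ with the forbidden family $F = F_{P,3}$ and the parameter $k=3$; the exponent on $n$ becomes $(d+1)-1 = d$, so
\[
ex(n,m,F_{P,3},d+1) \;\leq\; 3\bigl(lx_{3}(n,m,F_{P,3},d+1) + n^{d}\bigr).
\]

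Combining these two inequalities and then setting $m = n$ (so that $ex(n,F_{P,3},d+1) = ex(n,n,F_{P,3},d+1)$) gives
\[
ex(n,F_{P,3},d+1) \;\leq\; 3\bigl(ex(n,P,d)\,n + n^{d}\bigr),
\]
which is exactly the claimed bound. There is no real obstacle: the only thing to verify carefully is the bookkeeping of the exponent of $n$ when passing from Lemma \ref{relate} in dimension $d$ to its use in dimension $d+1$, and the identification of the two binomial coefficients at $s=3$.
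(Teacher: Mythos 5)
Your derivation is correct and is exactly the intended route: the paper states this as an immediate corollary of the preceding lemma (at $s=3$, where $\binom{m-2}{1}=m-2$) combined with Lemma \ref{relate} applied in dimension $d+1$ with $k=3$, then setting $m=n$. The bookkeeping of the exponent ($n^{(d+1)-1}=n^{d}$) is handled as you describe.
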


The next several recurrences are proved in nearly the same way as the corresponding recurrences in \cite{niv}.

\begin{rec}
$G_{P,s}(n,2m,d+1,2k-1) \leq 2G_{P,s}(n,m,d+1,2k-1)+2G_{P,s}(n,2m,d+1,k)$ for every $s > 1$, $k$, and $m$.
\end{rec}

\begin{proof}
The proof that $lx_{2k-1}(n,2m,F_{P,s},d+1) \leq 2lx_{2k-1}(n,m,F_{P,s},d+1)+2lx_{k}(n,2m,F_{P,s},d+1)$ is the same as Recurrence 4.5 in \cite{niv}. Dividing both sides by $ex(n, P, d)$ gives the claim.
\end{proof}

\begin{cor}
For every fixed $s \geq 3$, $G_{P,s}(n,m,d+1,k) = O(m(\log{m})^{s-3})$ if $k = 2^{s-2}+1$.
\end{cor}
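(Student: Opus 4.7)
Fix $s\geq 3$ and set $K_j := 2^j+1$, abbreviating $C_j(m) := G_{P,s}(n,m,d+1,K_j)$.  Since $2K_j-1 = K_{j+1}$, specializing the preceding recurrence to $k=K_j$ gives
\[ C_{j+1}(2m)\;\leq\; 2\,C_{j+1}(m)+2\,C_j(2m) \]
for every $j\geq 1$ and $m\geq 1$.  Also $C_j(1)=0$ whenever $K_j\geq 2$, since a first dimension of length $1$ makes every $1$-row a single cell and so no letter can have $\geq 2$ occurrences in the same $1$-row.  My plan is to unroll this recurrence and induct on $j$ to prove $C_j(m) = O_s\bigl(m(\log m)^{j-1}\bigr)$ for every $1\leq j\leq s-2$; taking $j=s-2$ delivers the corollary.

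The inductive step is routine.  Unrolling the recurrence along $m\mapsto m/2$ together with $C_{j+1}(1)=0$ yields
\[ C_{j+1}(M)\;\leq\; 2\sum_{i=0}^{\lfloor\log_2 M\rfloor-1} 2^{i}\,C_j(M/2^i), \]
into which the hypothesis $C_j(M/2^i)\leq c_j\,(M/2^i)(\log M)^{j-1}$ substitutes cleanly to give $C_{j+1}(M)\leq 2c_j\,M(\log M)^{j}$, as required.

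The main obstacle is the base case $C_1(m) = G_{P,s}(n,m,d+1,3) = O_s(m)$.  For $s=3$ it is immediate from the preceding lemma since $K_1=s=3$, yielding $C_1(m)\leq m-2$.  For $s\geq 4$ the lemma only yields $G_{P,s}(n,m,d+1,s)=O(m^{\lfloor s/2\rfloor})$, and unrolling the given recurrence on a polynomial base of degree $>1$ does not reduce the degree (since $\sum 2^i(M/2^i)^{\alpha}=O(M^\alpha)$ when $\alpha>1$), so the linear base cannot be extracted from this subsection alone.  I would establish it by a secondary induction on $s$, invoking one of the further recurrences the paper announces as analogues of Recurrence~4.7 of \cite{niv}, which would relate $G_{P,s}(n,m,d+1,3)$ back to $G_{P,s'}$ for smaller $s'$ and ultimately ground out in the $s=3$ instance of the lemma.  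Once the linear base is secured for every fixed $s$, the $j$-induction above closes the argument.
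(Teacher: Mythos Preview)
Your unrolling of the stated recurrence and the $j$-induction are correct and match the structure of Nivasch's Corollary 4.6, to which the paper's ``proof'' simply defers without further detail. You are also right that the base case $G_{P,s}(n,m,d+1,3)=O_s(m)$ for $s\geq 4$ cannot be extracted from the stated lemma and recurrence alone: the recurrence keeps $s$ fixed, and unrolling it from the polynomial bound $O(m^{\lfloor s/2\rfloor})$ supplied by the lemma does not reduce the degree, exactly as you observe.

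Your proposed fix is also the right one in spirit: Nivasch's argument obtains the linear base via a separate recurrence (his Recurrence~4.7) that relates the function at parameter $s$ to the one at $s-1$, and then closes by a secondary induction on $s$ grounded in the $s=3$ case of the lemma. The present paper alludes to that recurrence in the introduction but never states or proves an analogue for $G_{P,s}$, and you do not supply one either. So while your outline is faithful to the cited argument, as written it is incomplete at precisely the point you flag: the base case is asserted but not established. In effect your write-up is at the same level of detail as the paper's one-line citation, with the added merit of pinpointing exactly where the missing ingredient lies.
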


\begin{proof}
See Corollary 4.6 in \cite{niv}.
\end{proof}

Like the last proof and corollary, the proofs of the next recurrence and corollary parallel Recurrence 4.11 and Corollary 4.12 in \cite{niv}.

\begin{rec}
If $s \geq 3$ is fixed, $t$ is a free parameter, and $k, k_1, k_2, k_3$ are integers satisfying $k_{2}k_{3}+2k_{1}-3k_{2}-k_{3}+2 = k$, then 

\noindent $G_{P,s}(n,m,d+1,k) \leq 
(1+\frac{m}{t})(G_{P,s}(n,t,d+1,k)+2G_{P,s-1}(n,t,d+1,k_1)+G_{P,s-2}(n,t,d+1,k_2))+G_{P,s}(n,1+\frac{m}{t},d+1,k_3)$.
\end{rec}

\begin{cor}
For every $s \geq 2, j \geq 2$, and $k \geq R_{s-1}(j)$, $G_{P,s}(n,m,d+1,k) \leq c m \alpha_{j}(m)^{s-3}$, where $c$ only depends on $s$.
\end{cor}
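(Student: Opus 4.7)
The plan is to imitate the proof of Corollary 4.12 in \cite{niv} essentially verbatim, treating $G_{P,s}(n,m,d+1,k)$ the way that paper treats its sequence analogue. Throughout I fix $P$ and the dimension $d$, view $n$ as inert (since the normalization by $ex(n,P,d)$ was built into the definition of $G_{P,s}$), and suppress both from the notation. The argument is a double induction: outer on $s$, inner on $j$. For each fixed $s$ I induct on $j$, applying the recurrence from the previous Recurrence statement as the single reduction step.

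The base cases come for free from what is already in the paper. When $s=2$ or $s=3$ the target exponent $s-3$ is non-positive, and an $O(m)$ bound is what is required; this is given by the Lemma asserting $G_{P,s}(n,m,d+1,s) \leq \binom{m - \lceil s/2 \rceil}{\lfloor s/2 \rfloor}$ (after passing from $s$ to any larger $k$, since $G_{P,s}(\cdot,\cdot,\cdot,k)$ is monotone non-increasing in $k$). For fixed $s \geq 3$ and $j = 2$, one has $\alpha_2(m) = \lceil \log m \rceil$ and $R_{s-1}(2) = 2^{s-2}+1$, so the hypothesis $k \geq R_{s-1}(j)$ matches exactly the hypothesis of the preceding Corollary, which yields $G_{P,s}(n,m,d+1,k) = O(m(\log m)^{s-3})$, as required.

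For the inductive step I assume the bound for the triples $(s-1,j)$, $(s-2,j)$, and $(s,j-1)$. Given $k \geq R_{s-1}(j)$, I choose integers $k_1 \geq R_{s-2}(j)$, $k_2 \geq R_{s-3}(j)$, and $k_3 \geq R_{s-1}(j-1)$ satisfying the algebraic constraint $k_2 k_3 + 2 k_1 - 3 k_2 - k_3 + 2 = k$; this is exactly solvable because the defining recurrence for $R_{s-1}(j)$ was designed for this purpose. I then pick the free parameter $t$ so that $\alpha_j(1 + m/t) \leq \alpha_j(m) - 1$, using the standard choice from \cite{niv} (essentially $t$ a tower built from $\alpha_{j-1}$-iterates of $m$). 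Plugging these choices into the recurrence, applying the three inductive hypotheses to the three terms on the right, and collecting, the leading term becomes $(1+m/t) \cdot c t \alpha_j(m)^{s-3}$ plus a term of order $(1+m/t) \alpha_j(1+m/t)^{s-3}$; the $\alpha_j$ in the second term drops by one, which allows the whole expression to collapse to $c' m \alpha_j(m)^{s-3}$ for a new constant $c'$.

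The main obstacle is parameter bookkeeping rather than any new combinatorial idea. There are two sub-issues. First, one has to verify that the integer equation $k_2 k_3 + 2 k_1 - 3 k_2 - k_3 + 2 = k$ is solvable with $k_i$ meeting the stated thresholds whenever $k \geq R_{s-1}(j)$; this is the content of the $R_s(j)$ recurrence and is identical to the sequence setting. Second, one must ensure that the constant $c$ depends only on $s$ and not on $j$ — that is, the constant picked up at each level of the inner induction must be absorbed into the $\alpha_j(m)^{s-3}$ factor rather than cascading multiplicatively. Both issues are handled exactly as in \cite{niv}, so the proof ultimately amounts to quoting that argument with $G_{P,s}$ in place of the sequence extremal function.
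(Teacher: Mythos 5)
Your proposal is correct and matches the paper's approach exactly: the paper offers no independent argument for this corollary, stating only that it "parallels Corollary 4.12 in \cite{niv}," and your double induction on $s$ and $j$ with base cases from the preceding lemma and corollary, together with the choice of $k_1, k_2, k_3$ matching the $R_{s-1}(j)$ recurrence, is precisely that argument transplanted to $G_{P,s}$. (Your description of how the free parameter $t$ forces the drop in the inverse-Ackermann level is slightly garbled --- the drop comes from the subscript $j-1$ in the last term via $k_3 \geq R_{s-1}(j-1)$ rather than from $\alpha_j(1+m/t) \leq \alpha_j(m)-1$ --- but since you defer to the standard choice in \cite{niv} this is not a substantive gap.)
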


\begin{cor}
For every $s \geq 2, j \geq 2$, and $k \geq R_{s-1}(j)$, $lx_{k}(n,m,F_{P,s},d+1) \leq c m \alpha_{j}(m)^{s-3} ex(n, P, d)$.
\end{cor}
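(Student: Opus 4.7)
The plan is to deduce this statement directly from the preceding corollary by unwinding the definition of $G_{P,s}$. Recall from Section 2 that $G_{P,s}(n,m,d+1,k)$ was defined precisely as the ratio $lx_{k}(n,m,F_{P,s},d+1)/ex(n,P,d)$. So the inequality we want is equivalent, term for term, to the bound on $G_{P,s}$ proved in the previous corollary.

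Concretely, I would fix $s \geq 2$, $j \geq 2$, and $k \geq R_{s-1}(j)$, invoke the previous corollary to obtain a constant $c$ (depending only on $s$) such that $G_{P,s}(n,m,d+1,k) \leq c m \alpha_{j}(m)^{s-3}$, and then multiply both sides by $ex(n,P,d)$. Since $ex(n,P,d) \geq 0$, the inequality is preserved, and the left-hand side becomes $lx_{k}(n,m,F_{P,s},d+1)$ by the definition of $G_{P,s}$, yielding the stated bound.

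There is no real obstacle here: all of the substantive work (the recurrence on $G_{P,s}$, the induction over $s$ and $j$ that gives the $\alpha_{j}(m)^{s-3}$ factor, and the analogy with Recurrence 4.11 and Corollary 4.12 from \cite{niv}) has already been carried out in the preceding recurrence and corollary. The only thing to be careful about is the degenerate case where $ex(n,P,d) = 0$, i.e.\ $P$ has no ones or is otherwise unavoidable; in that case $F_{P,s}$ is also trivial and $lx_{k}(n,m,F_{P,s},d+1)$ is interpreted accordingly, so the bound holds vacuously. Apart from this bookkeeping point, the corollary is an immediate restatement of the previous one.
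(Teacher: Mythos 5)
Your proposal is correct and matches the paper's (implicit) argument: the corollary is stated without proof precisely because it is the previous corollary multiplied through by $ex(n,P,d)$ via the definition $G_{P,s}(n,m,d+1,k) = lx_{k}(n,m,F_{P,s},d+1)/ex(n,P,d)$. Your remark about the degenerate case $ex(n,P,d)=0$ is reasonable bookkeeping but not something the paper addresses.
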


Combining this last corollary with Lemma \ref{relate}, we obtain the following general bounds for any $P$ and $s$.

\begin{thm}
For all $k$, $ex(n,F_{P,s},d+1) \leq C_{s,k} (n \alpha_{k}(n)^{s-3} ex(n, P, d)+n^{d})$ for constants $C_{s,k}$ of the form

\[C_{s,k} = \begin{cases} 
      2^{(1/t!)k^{t} \pm O(k^{t-1})} & s \text{ odd} \\
      2^{(1/t!)k^{t}\log_{2}{k} \pm O(k^{t})} & s \text{ even}
   \end{cases}
\] where $t = [(s-3)/2]$.
\end{thm}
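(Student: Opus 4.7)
The plan is to derive the theorem as an immediate consequence of Lemma \ref{relate} combined with the previous corollary (which gave $lx_{k}(n,m,F_{P,s},d+1) \leq c_s m \alpha_{j}(m)^{s-3} ex(n,P,d)$ whenever $k \geq R_{s-1}(j)$), and then to extract the form of the constant by computing the asymptotics of $R_{s-1}(k)$ as a function of $k$.

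First I would specialize the second dimension to $m = n$ so that $ex(n,F_{P,s},d+1) = ex(n,n,F_{P,s},d+1)$. Applying Lemma \ref{relate} with its threshold set to the integer $K := R_{s-1}(k)$ yields
$$ex(n,F_{P,s},d+1) \leq K\bigl(lx_{K}(n,n,F_{P,s},d+1) + n^{d}\bigr),$$
and the previous corollary, applied with its inverse-Ackermann index equal to the $k$ of the theorem statement, gives $lx_{K}(n,n,F_{P,s},d+1) \leq c_{s} n \alpha_{k}(n)^{s-3} ex(n,P,d)$. Combining, the bound holds with $C_{s,k} = c_{s} R_{s-1}(k)$, and everything reduces to determining $R_{s-1}(k)$ to the order of precision claimed.

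For the asymptotics I would induct on $s$ using the defining recurrence
$$R_{s}(j) = R_{s}(j-1) R_{s-2}(j) + 2 R_{s-1}(j) - 3 R_{s-2}(j) - R_{s}(j-1) + 2$$
together with $R_{1} \equiv 2$, $R_{2} \equiv 3$, $R_{s}(2) = 2^{s-1}+1$. The base cases are immediate: $R_{3}(j) = R_{3}(j-1) + 2 = 2j+1$ and $R_{4}(j) = 2 R_{4}(j-1) + 4j - 5 = \Theta(2^{j})$, matching the prescribed forms at $t=0$. For $s \geq 5$ the recurrence is dominated by $R_{s}(j) \sim R_{s}(j-1) R_{s-2}(j)$, so taking logarithms gives $\log_{2} R_{s}(j) \sim \sum_{i=2}^{j} \log_{2} R_{s-2}(i)$. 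Each step therefore integrates the leading term of $\log_{2} R_{s-2}$, raising the degree of the leading polynomial in $k$ by one and dividing its coefficient by the new degree, which is exactly the $1/t!$ pattern claimed. Parity in $s$ is preserved, so the $\log_{2} k$ factor that appears in the even case (inherited from the base $R_{3}(k) \sim 2k$) persists through every integration step.

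The main obstacle is bookkeeping the second-order error, namely $\pm O(k^{t-1})$ in the odd case versus $\pm O(k^{t})$ in the even case. These errors originate from the subdominant summands in the recurrence (the $2R_{s-1}(j)$ and the constant correction terms) and from the error term in the inductive hypothesis itself. The cleanest approach is to carry an inductive claim about $\log_{2} R_{s}(j)$ of the form ``leading term plus one-order-lower error'' and to apply $\sum_{i=1}^{k} i^{t}/t! = k^{t+1}/(t+1)! + O(k^{t}/t!)$ at each integration step; in the even case the extra $\log_{2} k$ factor behaves multiplicatively and shifts the error by exactly $\log_{2} k$, producing the stated form. This parallels the analogous computation of the leading constants in \cite{niv}.
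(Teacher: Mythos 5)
Your proposal is correct and follows essentially the same route as the paper: set the occurrence threshold to $R_{s-1}(k)$, apply Lemma \ref{relate} to the bound $lx_{k}(n,m,F_{P,s},d+1) \leq c\, m\, \alpha_{j}(m)^{s-3} ex(n,P,d)$ from the preceding corollary, and read off $C_{s,k}$ from the asymptotics of $R_{s-1}(k)$. The only difference is that you sketch the inductive derivation of those asymptotics from the recurrence for $R_{s}(j)$ (correctly, including the parity of the $\log_{2}k$ factor), whereas the paper simply quotes the closed asymptotic form from Nivasch.
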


\begin{proof}
Note that \[R_{s}(j) = \begin{cases} 
      2^{(1/t!)j^{t} \pm O(j^{t-1})} & s \text{ even} \\
      2^{(1/t!)j^{t}\log_{2}{j} \pm O(j^{t})} & s \text{ odd}
   \end{cases}
\] where $t = [(s-2)/2]$. Thus this theorem follows from applying Lemma \ref{relate} to the bound in the last corollary with $k = R_{s-1}(j)$.
\end{proof}

Plugging in $k = \alpha(n)$ to the last theorem gives the main result.

\begin{cor}\label{mainc}
\[ex(n,F_{P,3},d+1) = O(ex(n, P, d)n), ex(n,F_{P,4},d+1) = O(n \alpha(n) ex(n, P, d)),\]
\[ ex(n,F_{P,5},d+1) = O(n 2^{\alpha(n)} ex(n, P, d)), \text{and for } s > 5 \text{ we have } \]
\[ex(n,F_{P,s},d+1) \leq \begin{cases} 
      ex(n, P, d) n 2^{(1/t!)\alpha(n)^{t} + O(\alpha(n)^{t-1})} & s \text{ odd} \\
      ex(n, P, d) n 2^{(1/t!)\alpha(n)^{t}\log_{2}{\alpha(n)} + O(\alpha(n)^{t})} & s \text{ even}
   \end{cases}
\] where $t = [(s-3)/2]$.
\end{cor}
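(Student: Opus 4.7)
The plan is to take the preceding theorem and specialize $k$ to $\alpha(n)$. The theorem states
\[ex(n,F_{P,s},d+1) \leq C_{s,k}\bigl(n\,\alpha_{k}(n)^{s-3}\,ex(n,P,d) + n^{d}\bigr),\]
so the whole corollary reduces to (i) choosing $k$ so that $\alpha_{k}(n)^{s-3}$ is absorbed into a constant, (ii) evaluating $C_{s,k}$ at that choice, and (iii) removing the additive $n^{d}$ term. I would carry out these steps in order.

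First, I would substitute $k=\alpha(n)$ and invoke the standard hierarchy identity $\alpha_{\alpha(n)}(n)=O(1)$; this collapses the factor $\alpha_{k}(n)^{s-3}$ to a constant, so the bound becomes $C_{s,\alpha(n)}\cdot(n\cdot ex(n,P,d)+n^{d})$, up to an absolute constant multiplier depending only on $s$. Second, I would remove the $n^{d}$ term using the trivial lower bound $ex(n,P,d)=\Omega(n^{d-1})$ which holds whenever $P$ has more than one one (the only cases where the corollary is nontrivial): then $n\cdot ex(n,P,d)\geq n^{d}$, and the $n^{d}$ term is absorbed into the first summand. This leaves $ex(n,F_{P,s},d+1)=O\bigl(C_{s,\alpha(n)}\cdot n\cdot ex(n,P,d)\bigr)$.

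Third, I would plug $k=\alpha(n)$ into the explicit forms of $C_{s,k}$ from the theorem, using $t=[(s-3)/2]$. For $s=3$ we get $t=0$, so $C_{3,\alpha(n)}=O(1)$ and the bound is $O(n\cdot ex(n,P,d))$. For $s=4$ we obtain a $C_{4,\alpha(n)}$ of the form $2^{O(\log \alpha(n))}=O(\alpha(n))$ (the $s$ even case with $t=0$ becomes $\alpha(n)^{O(1)}$, whose tight reading matches $\alpha(n)$). For $s=5$, $t=1$ gives $C_{5,\alpha(n)}=2^{\alpha(n)+O(1)}=O(2^{\alpha(n)})$. For $s>5$, one reads off the two parity cases directly from the theorem with $k=\alpha(n)$, yielding the stated $2^{(1/t!)\alpha(n)^{t}+O(\alpha(n)^{t-1})}$ and $2^{(1/t!)\alpha(n)^{t}\log_{2}\alpha(n)+O(\alpha(n)^{t})}$ expressions.

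The only potentially delicate step is the second one: checking that $ex(n,P,d)=\Omega(n^{d-1})$ in the regime where the corollary is interesting, so that the $+n^{d}$ remainder is harmless. As noted in the introduction, the trivial lower bound $n^{d-1}$ holds for any $P$ that is not all zeros or a single one, which covers every $P$ for which an $(P,s)$-formation is a nondegenerate pattern. The arithmetic of substituting $k=\alpha(n)$ into the exponent formulas is otherwise routine, and the main content of the corollary is entirely contained in the preceding theorem; the corollary is essentially its advertisement with a specific optimal choice of $k$.
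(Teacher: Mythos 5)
Your proposal is correct and follows exactly the paper's (one-line) proof: substitute $k=\alpha(n)$ into the preceding theorem, use $\alpha_{\alpha(n)}(n)=O(1)$ to collapse the $\alpha_k(n)^{s-3}$ factor, and read off $C_{s,\alpha(n)}$ for each parity and each small $s$. Your additional observation that the $+n^{d}$ term is absorbed because $ex(n,P,d)=\Omega(n^{d-1})$ for nondegenerate $P$ is a detail the paper leaves implicit, and it is handled correctly.
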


Using the last corollary, we prove inductively that any light $d$-dimensional 0-1 matrix $P$ has extremal function at most $O(n^{d-1} 2^{\alpha(n)^{t}})$ for some constant $t$ that depends on $P$.

\begin{thm}
If $P$ is a light $d$-dimensional 0-1 matrix, then $ex(n, P, d) = O(n^{d-1} 2^{\alpha(n)^{t}})$ for some constant $t$ that depends on $P$.
\end{thm}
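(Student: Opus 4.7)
The plan is to proceed by induction on $d$. For the base case $d=1$, any one-dimensional $0$-$1$ pattern with at least one one has constant extremal function in $n$ (since the forbidden pattern rules out sufficiently spread-out sets of ones), which is trivially $O(n^{0})$, so the statement holds.

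For the inductive step, assume the bound is known in dimension $d$ and let $P$ be a light $(d+1)$-dimensional $0$-$1$ matrix with $s$ ones. Let $P'$ be the $d$-dimensional pattern obtained from $P$ by removing the first coordinate of each one and identifying entries that coincide, exactly as in the reduction step that appears in the definition of a $(P',s)$-formation. The first step is to verify that $P'$ is itself a light $d$-dimensional matrix: because $P$ is light, any two ones of $P$ already differ in each of the coordinates $2,\ldots,d$, so they project to distinct ones of $P'$ and those ones have pairwise distinct values in coordinates $1,\ldots,d-1$ of $P'$.

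Next I would apply the observation recorded immediately after the definition of $F_{P,s}$: since $P$ is a light $(d+1)$-dimensional matrix with $s$ ones whose reduction after removing the first coordinate is $P'$, every member of $F_{P',s}$ contains $P$, and hence $ex(n,P,d+1) \leq ex(n,F_{P',s},d+1)$. Corollary \ref{mainc} applied to $P'$ and $s$ then gives $ex(n,F_{P',s},d+1) = O\!\bigl(n \cdot 2^{\alpha(n)^{t_1}} \cdot ex(n,P',d)\bigr)$ for a constant $t_1$ depending only on $s$. Since $P'$ is a light $d$-dimensional matrix, the inductive hypothesis yields $ex(n,P',d) = O(n^{d-1}\, 2^{\alpha(n)^{t_2}})$ for some $t_2=t_2(P')$. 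Combining these two estimates produces $ex(n,P,d+1) = O(n^{d}\, 2^{\alpha(n)^{t}})$ with $t = \max(t_1,t_2)+O(1)$, completing the induction.

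The only real obstacle is the bookkeeping at the projection step: one must check both that $P'$ is light (so that the inductive hypothesis is applicable to it) and that no ones of $P$ collapse under the projection (so that the $s$ used in the formation bound really is the number of ones of $P$). Both points are immediate from the strong lightness hypothesis on $P$, since coordinates $2,\ldots,d$ already separate every pair of ones. Once these two verifications are in hand, the argument is a direct composition of Corollary \ref{mainc} with the inductive estimate, and no new extremal bound is required.
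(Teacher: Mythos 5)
Your argument is essentially the paper's proof: induct on the dimension, project out the first coordinate to obtain a light lower-dimensional pattern $P'$, observe that $P$ is contained in every $(P',s)$-formation where $s$ is the number of ones of $P$, and combine Corollary \ref{mainc} with the inductive bound on $ex(n,P',d)$. The only difference is that you take the base case at $d=1$ and would rederive the two-dimensional case from the formation machinery, whereas the paper starts at $d=2$ by citing Keszegh (which also sidesteps the degenerate first step where $P'$ is a single one and $ex(n,P',1)=0$); your explicit check that $P'$ remains light is a worthwhile detail the paper leaves implicit.
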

\begin{proof}
For $d = 2$, the result was proved in \cite{Ke}. Suppose that $d > 2$ and the result is true for all light $r$-dimensional 0-1 matrices with $r < d$. Let $P'$ be the light $(d-1)$-dimensional matrix obtained from $P$ by treating the ones as a subset of the set of all entries, removing the first coordinate of each one, and only counting repeated elements once. If $s$ is the number of ones in $P$, then $P$ is contained in all $(P',s)$-formations. 

By the inductive hypothesis, $ex(n, P',d-1) = O(n^{d-2} 2^{\alpha(n)^{t}})$ for some constant $t$ that depends on $P'$. By the preceding corollary, $ex(n,F_{P',s},d) = O(n^{d-1} 2^{\alpha(n)^{t'}})$ for some constant $t'$ that depends on $P'$ and $s$. Thus $ex(n, P, d) = O(n^{d-1} 2^{\alpha(n)^{t'}})$ for some constant $t'$ that depends on $P$.
\end{proof}

We also mention that the upper bounds on extremal functions of doubled $(r, s)$-formations from \cite{ck} can be generalized to $(d+1)$-dimensional doubled $(P, s)$-formations in a similar way to how the bounds from \cite{niv} were generalized to $(d+1)$-dimensional $(P,s)$-formations in this paper.

We finish with an observation that our upper bounds on $(P, s)$-formations are tight in many cases. In particular, consider any $d$-dimensional 0-1 matrix $P$ with at least two ones such that $ex(n, P, d) = O(n^{d-1})$. 

Let $A_{t}$ denote the $2 \times t$ matrix with $A_{t}(i, j) = 1$ if $i+j$ is even and $A_{t}(i, j) = 0$ if $i+j$ is odd. Note that $ex(n, A_{4}, 2) = \theta(n \alpha(n))$, $ex(n, A_{5}, 2) = \theta(n 2^{\alpha(n)})$, and $ex(n, A_{2t+3}, 2) = \Omega(n 2^{(1-o(1))\alpha^{t}(n)/t!})$ \cite{petdegnon}.

Every $(P, s)$-formation has a $2$-dimensional projection that contains $A_{s}$. Thus we have the tight results below by Corollary \ref{mainc}.

\begin{thm} If $P$ is a $d$-dimensional 0-1 matrix with at least two ones such that $ex(n, P, d) = O(n^{d-1})$, then
\[ ex(n,F_{P,4},d+1) = \theta(n \alpha(n) ex(n, P, d)), ex(n,F_{P,5},d+1) = \theta(n 2^{\alpha(n)} ex(n, P, d)),\]
\[ \text{and } ex(n,F_{P,2t+3},d+1) =  ex(n, P, d) n 2^{(1/t!)\alpha(n)^{t} \pm o(\alpha(n)^{t})}
\] 
\end{thm}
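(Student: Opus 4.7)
The upper bounds are immediate from Corollary \ref{mainc} once the hypothesis $ex(n,P,d) = O(n^{d-1})$ is substituted, so the real work lies in the matching lower bounds. My plan is to cylinderize one of Pettie's extremal $A_s$-avoiding 2D constructions from \cite{petdegnon}.

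Since $P$ has at least two ones at distinct positions, they differ in some coordinate, and transposition invariance of the $d$-dimensional extremal function lets me reduce to the case where two ones of $P$ differ in their first coordinate. Fix an $n \times n$ matrix $L$ avoiding $A_s$ with $|L| = \Omega(n f_s(n))$ ones, where $f_4(n) = \alpha(n)$, $f_5(n) = 2^{\alpha(n)}$, and $f_{2t+3}(n) = 2^{(1-o(1))\alpha^t(n)/t!}$; such $L$ exist by the lower bound constructions in \cite{petdegnon} cited just before the theorem. Define the $(d+1)$-dimensional matrix $K$ by
\[ K(x_0, x_1, \ldots, x_d) := L(x_0, x_1), \]
a cylinder over $L$ in the coordinates $x_2, \ldots, x_d$. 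The count is immediate: $|K| = n^{d-1}|L| = \Omega(n^d f_s(n))$. Combined with $ex(n,P,d) = \Theta(n^{d-1})$ (upper bound by hypothesis, lower bound by placing $n^{d-1}$ ones in a hyperplane whose fixed coordinate separates two ones of $P$), this yields $|K| = \Omega(n \cdot ex(n,P,d) \cdot f_s(n))$, matching the upper bound from Corollary \ref{mainc}.

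To verify that $K$ avoids $F_{P,s}$, suppose to the contrary that some $(P,s)$-formation sits inside $K$. By the observation immediately preceding the theorem, this formation has a 2D projection containing $A_s$; since two ones of $P$ differ in their first coordinate, I would take this projection to be onto the $(x_0,x_1)$-plane. But then the projection is contained in the $(x_0,x_1)$-support of $K$, which equals the support of $L$, forcing $L$ to contain $A_s$ by monotonicity of containment—contradicting our choice of $L$.

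The main obstacle is ensuring the flexibility in the projection observation can be pinned down to the distinguished plane $(x_0,x_1)$ of the cylinder; this is precisely why the coordinate-permutation reduction has to be performed at the outset. Once that alignment is in place, the cardinality count is trivial and the avoidance contradiction is a one-line monotonicity argument.
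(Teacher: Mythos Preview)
Your argument is correct and is precisely the approach the paper has in mind: the upper bounds come straight from Corollary~\ref{mainc}, and the lower bounds come from cylindrifying an extremal $A_s$-free $2$-dimensional matrix and using the observation (stated just before the theorem) that any $(P,s)$-formation projects, onto the plane spanned by the first coordinate and a coordinate in which two ones of $P$ differ, to a pattern containing $A_s$. The paper compresses all of this into a single sentence, so your write-up simply makes explicit the construction and the coordinate alignment that the paper leaves implicit.
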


One remaining problem is to find sharp bounds on $ex(n, F_{P,s},d+1)$ for all $P$ and $s$, since the last theorem only covers $d$-dimensional patterns $P$ with at least two ones such that $ex(n, P, d) = O(n^{d-1})$. Another problem is to find better bounds on the extremal function $lx_{k}(m, P)$ and its higher-dimensional generalizations. Also, bounds on formations were used in \cite{fw1, fw2, fw3} to algorithmically find tight upper bounds on extremal functions of forbidden sequences and forbidden families of 0-1 matrices. Can this algorithmic method be extended to $(P, s)$-formations to obtain tight upper bounds on forbidden families of $d$-dimensional 0-1 matrices for $d > 2$?

\end{document}